




\documentclass[pdflatex,sn-mathphys]{sn-jnl}
\usepackage[all,cmtip]{xy}



\jyear{2022}%

\theoremstyle{plain} 
\newtheorem{thm}{Theorem}[section] 
\newtheorem{cor}[thm]{Corollary} 
\newtheorem{lem}[thm]{Lemma} 
\newtheorem{prop}[thm]{Proposition}

\theoremstyle{definition} 
\newtheorem{defn}{Definition}[section]

\theoremstyle{remark} 
\newtheorem{rem}{Remark}

\raggedbottom

\begin{document}

\title[]{An equivalent definition for multiplicities of a quaternionic eigenvalue}


\author*[1,2]{\pfx{Dr} \fnm{Stefano} \sur{Spessato} \sfx{ORCID: 0000-0003-0069-5853}\email{stefano.spessato@unicusano.it }}

\affil*[1]{\orgname{Università degli Studi "Niccolò Cusano"}, \orgaddress{\street{via don Carlo Gnocchi 3}, \city{Rome}, \postcode{00166}, \country{Italy}}}


\abstract{In this paper we introduce two definitions for algebraic and geometric multiplicities of a quaternion right eigenvalue. This definitions are equivalent to the classical ones. However, differently from the usual definitions, the notions of \textit{root subspace} and Jordan form of a quaternion matrix are not required and all the properties can be proved easily.}

\keywords{quaternion, right eigenvalue, geometric multiplicity, algebraic multiplicity}



\maketitle
\subsection*{Acknowledgments}
I would like to thank the Unione Matematici Italiani (U.M.I.) for giving to me the opportunity of held a talk during the First U.M.I. meeting of Ph.D. students in May. Moreover I would also thank Alfredo Donno and Matteo Cavaleri for their advice and their help.
\\
\\Data sharing not applicable to this article as no datasets were generated or analysed during the current study.

\section*{Introduction}
Spectral properties of matrices with quaternion entries are an important research topic in last years. This happens for the large number of applications of quaternions both in Pure and Applied Mathematics. 
\\However, since the space of quaternions $\mathbb{H}$ is not a commutative algebra, is quite difficult to obtain the quaternion version of the classical theorems of Complex Linear Algebra. One of this result is the Quaternion Spectral Theorem  of Farenick and Pidkowich \cite{Spec}. Sometimes it is also difficult to define the equivalent notions: for example a quaternion matrix has at least two kinds spectra, which are called the \emph{right} spectrum and the \emph{left} spectrum. In particular, in this case, the most common choice is to study the right spectrum of a quaternion matrix.
\\In this paper, after some basiscs of Quaternion Linear Algebra (Section \ref{basi}), we give two new definitions for the algebraic and geometric multiplicity (resp. \emph{a.m.} and \emph{g.m.}) of a right eigenvalue of a quaternion matrix (Section \ref{2}). Moreover, always in the same Section, we also give the proofs of the main properties of this multiplicities. In particular we prove
\begin{itemize}
    \item the invariance of the multiplicities under the  similarity relation
    \item the classical inequality $ a.m.(q,A) \geq g.m.(q,A)$, where $q$ is a right-eigenvalue of a quaternion matrix $A$,
    \item the equality $ a.m.(q,A) = g.m.(q,A)$, if $A$ is a normal matrix.
\end{itemize}
Finally, in Section \ref{3}, we prove that our definitions are equivalent to the classical ones introduced in the book of Rodman \cite{Quat}.
\\The advantage of our definitions is that they can be given without the notions of \emph{root subspace} or \emph{Jordan form} of a quaternion matrix and so they can be used also by people which are not in Quaternion Linear Algebra. Moreover also the main properties can be proved without this tools. 
\section{Basics of Quaternion Linear Algebra}\label{basi}
\subsection{Quaternions}
Let $\mathbb{H}$ be the unital associative $\mathbb{R}$-algebra whose generators are $i$, $j$ and $k$ where
\begin{equation}
    i^2 = j^2 = k^2 = ijk = -1.
\end{equation}
This algebra is called \textbf{the algebra of real quaternions}. Given $q = a + bi + cj + dk$ in $\mathbb{H}$ the \textit{imaginary part} of $q$ is $Im(q) = bi + cj + dk$ and the \textit{real part} of $q$ is $Re(q) = a$.
Then \textit{conjugate} of a quaternion $q$ is the quaternion $\overline{q} := Re(q) - Im(q)$. 
\\$\mathbb{H}$ is not a commutative algebra, indeed $ij = k$, but
\begin{equation}
ji = j i (jj)(-1) = j(ij)j(-1) = jkj(-1) = ij(-1) = -k. 
\end{equation}
Two quaternions $q_1, q_2 \in \mathbb{H}$ are \emph{similar} if there is $h$ in $\mathbb{H}$ such that
\begin{equation}
    q_1 = h^{-1} q_2 h.
\end{equation}
If $q_1$ and $q_2$ are similar, then we write $q_1 \sim q_2$ and we denote by $[q_1]$ the class of $q$ in $\mathbb{H}$ such that $q \sim q_1$. Observe that if $Im(q) = 0$, then $[q] = \{q\}$, indeed $qh = hq$ for each $h$ in $\mathbb{H}$. By Lemma 2.2 of the paper of Farenick and Pidkowich \cite{Spec} we know that for each $q$ in $\mathbb{H}$ there is a unique $\lambda_q$ in $\mathbb{C}$ such that $Im(\lambda_q) \geq 0$ and
\begin{equation}
[q] = [\lambda_q] = [\overline{\lambda_q}]. 
\end{equation}
\subsection{Right spectrum of a quaternion matrix}
Let us denote by $\mathbb{H}^n$ the space of column vectors of length $n$ with quaternionic entries. Let $\mathbb{H}^{m \times n}$ be the space of $m \times n$ matrices with quaternionic entries. Given a matrix $A =(a_{ij})$ in $\mathbb{H}^{m \times n}$ we define the \textit{conjugate transpose} of $A$ as the $n \times m$ matrix $A^* := (\overline{a_{ji}})$. A square matrix $A$ is \textit{hermitian} if $A= A^*$ and it is normal if $AA^* = A^*A$.
\\For a square matrix $A$ in $\mathbb{H}^{n \times n}$, we say that $q \in \mathbb{H}$ is a \textit{right eigenvalue} if there is a $x \neq 0$ in $\mathbb{H}^n$ such that $Ax = x\cdot q$.
We say \textit{right spectrum} of $A$ the set $\sigma_r(A)$ of all right eigenvalues of $A$.
\\Notice that if $q$ is a right eigenvalue of $A$ and $q'$ is an element of $\mathbb{H}$ such that $q \sim q'$ then also $q'$ is a right eigenvalue of $A$. Indeed if $q' = sqs^{-1}$ and $Aw = w\cdot q$ then $A(ws^{-1}) = (ws^{-1})q'$.
\\Observe that each quaternion $q = a + bi + cj + dk$ can be seen as $q = (a + ib) + (c+id)j$ where $a + i b$ and $c + id$ are complex numbers. In the same way every quaternion matrix $A$ in $\mathbb{H}^{n \times n}$ can be decomposed as $A = A_1 + A_2j$ where $A_1$ and $A_2$ are $n \times n$-complex matrices.
\\Then the \textit{complex adjoint matrix} of $A$ is the matrix $f(A)$ in $\mathbb{C}^{2n \times 2n}$ defined as
\begin{equation}
    f(A) := \begin{bmatrix}
    A_1 && A_2 \\ -\overline{A}_2 && \overline{A}_1
    \end{bmatrix}
\end{equation}
\begin{prop}[pag. 78 of \cite{Spec}]
For each $A,B$ in $\mathbb{H}^{n\times n}$ the following properties hold:
\begin{itemize}
\item $f(Id_n) = Id_{2n}$;
    \item $f(A + B) = f(A) + f(B)$;
    \item $f(A\cdot B) = f(A) \cdot f(B);$
    \item $f(A^*) = f(A)^*$;
    \item $f(A^{-1}) = f(A)^{-1}$.
\end{itemize}
\end{prop}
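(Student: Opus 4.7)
The plan is to verify each of the five identities by direct computation in the decomposition $A = A_1 + A_2 j$, $B = B_1 + B_2 j$ with $A_i, B_i \in \mathbb{C}^{n\times n}$. The two algebraic facts I will use repeatedly are $jz = \overline{z}\,j$ for every complex $z$ (which extends entrywise to $jM = \overline{M}\,j$ for a complex matrix $M$), together with the entrywise conjugation rule $\overline{a + bj} = \overline{a} - bj$ for $a, b \in \mathbb{C}$.

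The first two items are essentially formal. Writing $Id_n = Id_n + 0\cdot j$ gives $(Id_n)_1 = Id_n$ and $(Id_n)_2 = 0$, so $f(Id_n)$ is the block identity $Id_{2n}$. Additivity is immediate, since the decomposition $A \mapsto (A_1, A_2)$ is itself $\mathbb{R}$-linear and $f$ only arranges the two complex blocks into a larger matrix, an operation that is itself linear.

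The main calculation is the multiplicativity. Expanding
\begin{equation*}
AB = (A_1 + A_2 j)(B_1 + B_2 j) = A_1 B_1 + A_1 B_2\, j + A_2 (j B_1) + A_2 (j B_2)\, j,
\end{equation*}
and then substituting $jB_1 = \overline{B_1}\,j$ and $(jB_2)j = \overline{B_2}\, j^2 = -\overline{B_2}$, I obtain
\begin{equation*}
AB = \bigl(A_1 B_1 - A_2 \overline{B_2}\bigr) + \bigl(A_1 B_2 + A_2 \overline{B_1}\bigr) j,
\end{equation*}
so $(AB)_1 = A_1 B_1 - A_2\overline{B_2}$ and $(AB)_2 = A_1 B_2 + A_2\overline{B_1}$. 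A direct block multiplication of $f(A) f(B)$ reproduces exactly the top row of $f(AB)$, and the bottom row is then forced by the conjugation pattern built into the definition of $f$; one checks the remaining two blocks agree by conjugating and changing sign in the appropriate places.

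For $f(A^*) = f(A)^*$, applying the entrywise conjugation rule and transposing gives $(A^*)_1 = A_1^*$ and $(A^*)_2 = -A_2^T$, from which the four blocks of $f(A^*)$ are matched to those of the block conjugate transpose of $f(A)$, the only non-trivial identification being $\overline{A_2^T} = A_2^*$. Finally, $f(A^{-1}) = f(A)^{-1}$ is a corollary of the identity and multiplicativity items already established, since $f(A)f(A^{-1}) = f(A A^{-1}) = f(Id_n) = Id_{2n}$. The only real obstacle is the bookkeeping in the multiplicativity step, where the non-commutativity of $\mathbb{H}$ forces a complex conjugation every time $j$ crosses a complex matrix; once that is handled cleanly, all the other identities reduce to straightforward comparisons of complex blocks.
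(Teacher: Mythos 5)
Your proof is correct: the block identities you compute, namely $(AB)_1 = A_1B_1 - A_2\overline{B_2}$, $(AB)_2 = A_1B_2 + A_2\overline{B_1}$, and $(A^*)_1 = A_1^*$, $(A^*)_2 = -A_2^T$, do match the four blocks of $f(A)f(B)$ and $f(A)^*$ respectively, and deducing $f(A^{-1}) = f(A)^{-1}$ from multiplicativity and $f(Id_n) = Id_{2n}$ is the standard argument (for invertible $A$). Note that the paper itself gives no proof of this proposition, quoting it directly from Farenick and Pidkowich; your direct verification via the decomposition $A = A_1 + A_2 j$ and the rule $jM = \overline{M}j$ is exactly the computation that reference performs, so you have simply supplied the details the paper delegates to the citation.
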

These properties are very useful in order to investigate the right spectrum of a quaternion matrix. In particular we obtain the following result
\begin{prop}\label{f=r}
Let $A$ be a matrix in $\mathbb{H}^{n\times n}$. Then 
\begin{equation}
    Spec(f(A)) = \{\lambda \in \mathbb{C} \vert [\lambda] = [q] \mbox{   for each   } q \in \sigma_r(A)\}.
\end{equation}
In particular, if $A = A^*$, then $\sigma_r(A) \subset \mathbb{R}$ and
\begin{equation}
    Spec(f(A)) = \sigma_r(A).
\end{equation}
\end{prop}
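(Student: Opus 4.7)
The plan is to prove the two inclusions of (6) by writing every quaternion vector $x\in\mathbb{H}^n$ as $x=u+vj$ with $u,v\in\mathbb{C}^n$, and matching the eigenvalue equations on both sides via the block structure of $f(A)$. The key algebraic identity I will use throughout is $j\alpha=\overline{\alpha}j$ for $\alpha\in\mathbb{C}$, which after one direct expansion gives
\[
Ax=(A_1+A_2j)(u+vj)=(A_1u-A_2\overline{v})+(A_2\overline{u}+A_1v)j.
\]
Before starting, I would observe that the right hand side of (6) equals $\sigma_r(A)\cap\mathbb{C}$: since $\sigma_r(A)$ is closed under similarity and every class $[q]$ contains a unique $\lambda_q\in\mathbb{C}$ with $Im(\lambda_q)\geq 0$, the condition $[\lambda]=[q]$ for some $q\in\sigma_r(A)$ is the same as $\lambda\in\sigma_r(A)$.

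For the inclusion $\sigma_r(A)\cap\mathbb{C}\subseteq Spec(f(A))$, I take $\lambda\in\mathbb{C}$ with eigenvector $x=u+vj\neq 0$ and compare $Ax$ with $x\lambda=\lambda u+v\overline{\lambda}\,j$. Splitting into the complex and $j$-components gives
\[
A_1u-A_2\overline{v}=\lambda u,\qquad A_2\overline{u}+A_1v=\overline{\lambda}\,v.
\]
Conjugating the second equation and rearranging yields exactly $f(A)\begin{bmatrix}u\\ -\overline{v}\end{bmatrix}=\lambda\begin{bmatrix}u\\ -\overline{v}\end{bmatrix}$, and since $x\neq 0$ forces $(u,\overline{v})\neq(0,0)$, this complex vector is nonzero. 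For the reverse inclusion I take $\lambda\in Spec(f(A))$ with eigenvector $\begin{bmatrix}u\\ v\end{bmatrix}\neq 0$, set $x:=u-\overline{v}j\in\mathbb{H}^n\setminus\{0\}$, and run the same calculation backwards to obtain $Ax=x\lambda$. Combined with the preliminary observation this proves the first assertion.

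For the second assertion, if $A=A^*$ then the listed properties of $f$ give $f(A)=f(A^*)=f(A)^*$, so $f(A)$ is Hermitian and $Spec(f(A))\subseteq\mathbb{R}$. By the first part, every $q\in\sigma_r(A)$ is similar to its canonical representative $\lambda_q\in Spec(f(A))\subseteq\mathbb{R}$; but a real quaternion is similar only to itself, so $[q]=\{\lambda_q\}$ and $q=\lambda_q\in\mathbb{R}$. Hence $\sigma_r(A)\subseteq\mathbb{R}$, and the general identity collapses to $Spec(f(A))=\sigma_r(A)$.

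The only delicate point is the bookkeeping of signs and conjugations when passing between the quaternionic eigenvector $u+vj$ and the stacked complex eigenvector $\begin{bmatrix}u\\ -\overline{v}\end{bmatrix}$, together with verifying that nonzeroness is preserved in both directions; the rest is a direct computation based on (5) and $j\alpha=\overline{\alpha}j$.
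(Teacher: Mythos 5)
Your proof is correct and is essentially the argument the paper itself relies on: the paper's proof merely defers to points 1.\ and 2.\ of Lemma 2.5 of Farenick--Pidkowich, whose content is exactly your correspondence $x=u+vj \mapsto (u,-\overline{v})$ between quaternionic right $\lambda$-eigenvectors and complex eigenvectors of $f(A)$ (the same map, via Lemma 2.4 of that reference, reappears later in the paper's proof of Proposition \ref{geomeq}). Your preliminary reading of the right-hand side as $\{\lambda\in\mathbb{C} \mid [\lambda]=[q] \mbox{ for \emph{some} } q\in\sigma_r(A)\}=\sigma_r(A)\cap\mathbb{C}$ is the intended interpretation of the statement, and the Hermitian case is handled correctly via $f(A)=f(A)^*$ together with the fact that a real quaternion is similar only to itself.
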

\begin{proof}
This is essentially the proof of points 1. and 2. of Lemma 2.5 of \cite{Spec}.
\end{proof}
Observe that if $A$ is a $1 \times 1$-matrix, then $f: \mathbb{H} \longrightarrow \mathbb{C}^{2\times 2}$. So, for each matrix $A = (a_{rs})$ in $\mathbb{H}^{n\times n}$, we obtain a block matrix $\pi(A) := (\pi(a_{rs})) = (f(a_{rs}))$ in $\mathbb{C}^{2n \times 2n}$.
\begin{prop}\label{f=pi}
For each matrix $A$ in $\mathbb{H}^{n\times n}$, the matrices $f(A)$ and $\pi(A)$ are similar. This implies that
\begin{equation}
    Spec(f(A)) = Spec(\pi(A))
\end{equation}
and, if we denote by $a.m.(\lambda, B)$ and $g.m.(\lambda, B)$ the algebraic multiplicity and the geometric multiplicity of an eigenvalue $\lambda$ of a matrix $B$, then
\begin{equation}
a.m.(\lambda, \pi(A)) = a.m.(\lambda, f(A)) \mbox{     and    } g.m.(\lambda, \pi(A)) = g.m.(\lambda, f(A)).
\end{equation}
\end{prop}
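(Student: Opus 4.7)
The plan is to exhibit an explicit permutation matrix $P \in \mathbb{C}^{2n \times 2n}$ such that $P^{-1} f(A) P = \pi(A)$, which immediately yields similarity and hence equality of the spectra together with equality of both types of multiplicities. The intuition is that $f(A)$ and $\pi(A)$ encode exactly the same data but reorder it: in $f(A)$ the first $n$ indices collect all ``complex parts'' $A_1$ and the last $n$ indices collect all ``$j$-parts'' $A_2$ of the entries $a_{rs} = a^{(1)}_{rs} + a^{(2)}_{rs} j$, whereas in $\pi(A)$ the two pieces associated to a single entry $a_{rs}$ are placed together in the $2 \times 2$ block at position $(r,s)$.

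Concretely, I would define the permutation $\sigma$ of $\{1,\dots,2n\}$ by $\sigma(2r-1) = r$ and $\sigma(2r) = n+r$ for $r=1,\dots,n$, and let $P$ be the corresponding permutation matrix (so that $P^{-1} = P^T$). The key verification is then that $(P^T f(A) P)_{ij} = (f(A))_{\sigma(i),\sigma(j)}$ agrees with $(\pi(A))_{ij}$ for all $i,j$. This is a four-case check on the parities of $i$ and $j$: writing $i \in \{2r-1, 2r\}$ and $j \in \{2s-1,2s\}$, the four resulting entries recover in turn $a^{(1)}_{rs}$, $a^{(2)}_{rs}$, $-\overline{a^{(2)}_{rs}}$ and $\overline{a^{(1)}_{rs}}$, which are precisely the entries of the block $f(a_{rs})$ of $\pi(A)$.

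Once similarity $f(A) = P \pi(A) P^{-1}$ is established, everything else is standard: similar complex matrices have the same characteristic polynomial, hence the same spectrum and the same algebraic multiplicities; and conjugation by $P$ sends $\ker(\pi(A) - \lambda I)^{k}$ isomorphically onto $\ker(f(A) - \lambda I)^{k}$ for every $k \geq 1$, so in particular the geometric multiplicities coincide as well.

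The only real obstacle is bookkeeping the permutation correctly and making sure the case analysis matches the $2 \times 2$ blocks of $f$ with the correct rows/columns of $f(A)$; once the permutation $\sigma$ is chosen as above, the verification is routine index-chasing.
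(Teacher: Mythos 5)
Your proposal is correct and follows essentially the same route as the paper: the paper also exhibits an explicit permutation (its $\tau$ with $\tau(2k+1)=k+1$, $\tau(2k)=n+k$ is exactly your $\sigma$) and the corresponding permutation matrices $C_\tau$, $R_\tau = C_\tau^{-1}$ satisfying $f(A) = R_\tau \pi(A) C_\tau$, then concludes equality of spectra and multiplicities from similarity. Your entrywise four-case check $(P^T f(A) P)_{ij} = (f(A))_{\sigma(i),\sigma(j)} = (\pi(A))_{ij}$ is just a cleaner way of doing the block computation the paper writes out explicitly.
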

\begin{proof}
The proof of this fact is similar to the the proof Theorem 5.9 of the work of Donno, Cavaleri and S. \cite{Godsil}. In their work, indeed, it is proved that, given a quaternion matrix $A$, then $\pi(A)$ and $f(A)$ are similar as complex matrices. So this fact implies also our statement. However, for sake of completeness, we report the proof of the similarity of $f(A)$ and $\pi(A)$.
\\Notice that, for each entry $a_{rs}$, of $A$ there are two complex numbers $a_{rs}^{[2]}$ and $a_{rs}^{[1]}$ such that
\begin{equation}
    a_{rs} = a_{rs}^{[1]} + a_{rs}^{[2]}j.
\end{equation}
Then, the matrix $\pi(A)$, is given by 
\begin{equation}
\pi(A) = \begin{bmatrix} a_{1,1}^{[1]} & a_{1,1}^{[2]} & ...  & a_{1,n}^{[1]} & a_{1,n}^{[2]} \\
-\overline{a}_{1,1}^{[2]} & \overline{a}_{1,1}^{[1]}  & ... & -\overline{a}_{n,n}^{[2]} & \overline{a}_{n,n}^{[1]}\\
\vdots & \vdots&  ... & \vdots& \vdots\\
a_{n,1}^{[1]} & a_{n,1}^{[2]} & ...  & a_{n,n}^{[1]} & a_{n,n}^{[2]} \\
-\overline{a}_{n,1}^{[2]} & \overline{a}_{n,1}^{[1]} & ... & -\overline{a}_{n,n}^{[2]} & \overline{a}_{n,n}^{[1]}\\
\end{bmatrix}
\end{equation}
Let us denote by $\tau:\{1, ..., 2n\} \longrightarrow \{1, ..., 2n\}$ the permutation defined as
\begin{equation}
    \tau(r) = \begin{cases} n + k \mbox{ if   } r=2k \\
    k+1 \mbox{ if   } r=2k +1
    \end{cases}
\end{equation}
Let $e_b$ be the $2n$-vector which has $1$ in the $b$-th entry and $0$ otherwise. Let us denote by $C_\tau$ the permutation $2n \times 2n$-matrix such that the $b$-th column is $e_{\tau(b)}$ and let $R_\tau$ be its transpose. Then
\begin{equation}
f(A) = R_\tau \pi(A) C_\tau.    
\end{equation}
Indeed
\begin{equation}
    \pi(A) C_\tau = \begin{bmatrix}  a_{1,1}^{[1]} & a_{1,2}^{[1]} & ... & a_{1, n}^{[1]} & a_{1,1}^{[2]} & ... & a_{1,n-1}^{[2]} & a_{1,n}^{[2]}\\
    -\overline{a}_{1,1}^{[2]} & -\overline{a}_{1,2}^{[2]} & ... & -\overline{a}_{1, n}^{[2]} & \overline{a}_{1,1}^{[1]} & ... & \overline{a}_{1,n-1}^{[1]} & \overline{a}_{1,n}^{[1]}\\
    \vdots & \vdots & & \vdots & \vdots & & \vdots & \vdots \\
    a_{n,1}^{[1]} & a_{n,2}^{[1]} & ... & a_{n, n}^{[1]} & a_{n,1}^{[2]} & ... & a_{n,n-1}^{[2]} & a_{n,n}^{[2]}\\
    -\overline{a}_{n,1}^{[2]} & -\overline{a}_{n,2}^{[2]} & ... & -\overline{a}_{n, n}^{[2]} & \overline{a}_{n,1}^{[1]} & ... & \overline{a}_{n,n-1}^{[1]} & \overline{a}_{n,n}^{[1]}
    \end{bmatrix}
\end{equation}
and
\begin{equation}
    R_\tau \pi(A) C_\tau = \begin{bmatrix}  a_{1,1}^{[1]} & a_{1,2}^{[1]} & ... & a_{1, n}^{[1]} & a_{1,1}^{[2]} & ... & a_{1,n-1}^{[2]} & a_{1,n}^{[2]}\\
    a_{2,1}^{[1]} & a_{2,2}^{[1]} & ... & a_{2, n}^{[1]} & a_{2,1}^{[2]} & ... & a_{2,n-1}^{[2]} & a_{2,n}^{[2]}\\
    \vdots & \vdots & &\vdots &\vdots & &\vdots & \vdots \\
    a_{n,1}^{[1]} & a_{n,2}^{[1]} & ... & a_{n, n}^{[1]} & a_{n,1}^{[2]} & ... & a_{n,n-1}^{[2]} & a_{n,n}^{[2]}\\
    -\overline{a}_{1,1}^{[2]} & -\overline{a}_{1,2}^{[2]} & ... & -\overline{a}_{1, n}^{[2]} & \overline{a}_{1,1}^{[1]} & ... & \overline{a}_{1,n-1}^{[1]} & \overline{a}_{1,n}^{[1]}\\
    \vdots & \vdots & &\vdots &\vdots & &\vdots & \vdots \\
   -\overline{a}_{n-1,1}^{[2]} & -\overline{a}_{n-1,2}^{[2]} & ... & -\overline{a}_{n-1, n}^{[2]} & \overline{a}_{n-1,1}^{[1]} & ... & \overline{a}_{n-1,n-1}^{[1]} & \overline{a}_{n-1,n}^{[1]}\\
    -\overline{a}_{n,1}^{[2]} & -\overline{a}_{n,2}^{[2]} & ... & -\overline{a}_{n, n}^{[2]} & \overline{a}_{n,1}^{[1]} & ... & \overline{a}_{n,n-1}^{[1]} & \overline{a}_{n,n}^{[1]}
    \end{bmatrix}
\end{equation}
Then if $A = (a_{rs})$, then
\begin{equation}
 R_\tau \pi(A) C_\tau = 
    \begin{bmatrix}
    a_{rs}^{[1]} & a_{rs}^{[2]} \\
    -\overline{a}_{rs}^{[2]} & \overline{a}_{rs}^{[1]}
    \end{bmatrix} = f(A).
\end{equation}
Notice that, since $R_\tau$ is a permutation matrix, then $R_\tau^{-1} = C_\tau$ and so $\pi(A)$ and $f(A)$ are similar. This means that they have the same spectrum and the same multiplicities. 
\end{proof}
\subsection{Schur’s triangularization theorem}
We conclude this Section with a classical result on Quaternion Linear Algebra. This is an immediate consequences of Fundamental Theorem of Algebra. A short proof of this result can be found in the book of Rotman\cite{Quat}
\begin{lem}[Theorem 5.3.5. of \cite{Quat}]
For every $A \in \mathbb{H}^{n\times n}$ there exists an eigenvalue which is a complex number with nonnegative imaginary part.
\end{lem}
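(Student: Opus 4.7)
The plan is to assemble three ingredients that are already in place in Section \ref{basi}: the Fundamental Theorem of Algebra, Proposition \ref{f=r} relating $\operatorname{Spec}(f(A))$ to $\sigma_r(A)$, and the observation made right after the definition of right eigenvalue that $\sigma_r(A)$ is closed under similarity in $\mathbb{H}$. Together with Lemma 2.2 of \cite{Spec}, which provides, for every $q\in\mathbb{H}$, a unique complex representative $\lambda_q$ of the class $[q]$ with $\operatorname{Im}(\lambda_q)\geq 0$, these suffice to produce the desired eigenvalue.

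The argument proceeds in three short steps. First, I would apply the Fundamental Theorem of Algebra to the characteristic polynomial of the complex $2n\times 2n$ matrix $f(A)$ to extract at least one $\mu\in\operatorname{Spec}(f(A))$. Second, by Proposition \ref{f=r} this $\mu$ must satisfy $[\mu]=[q]$ for some right eigenvalue $q\in\sigma_r(A)$; in particular $\sigma_r(A)$ is nonempty. Let $\lambda_q\in\mathbb{C}$ be the unique complex representative of $[q]$ with $\operatorname{Im}(\lambda_q)\geq 0$ given by Lemma 2.2 of \cite{Spec}. Third, since $\lambda_q\sim q$ and $\sigma_r(A)$ is invariant under similarity, we have $\lambda_q\in\sigma_r(A)$, and by construction $\lambda_q$ is a complex number with nonnegative imaginary part.

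This is essentially a bookkeeping exercise, so I do not expect a genuine obstacle: the heavy lifting (FTA for complex polynomials, the correspondence $\operatorname{Spec}(f(A))\leftrightarrow\sigma_r(A)$ up to similarity, and the closure of $\sigma_r(A)$ under similarity) has already been done earlier in the section. The only subtlety worth flagging is that one should not try to produce the representative with nonnegative imaginary part by conjugating $\mu$ directly; instead, one invokes Lemma 2.2 of \cite{Spec} to pick the canonical complex representative of $[q]$, and then transports it back into $\sigma_r(A)$ via similarity invariance.
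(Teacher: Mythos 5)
Your argument is correct and follows exactly the route the paper itself indicates for this lemma (which it does not prove but cites from Rodman, remarking it is an immediate consequence of the Fundamental Theorem of Algebra): apply FTA to the characteristic polynomial of $f(A)$, pass from $\operatorname{Spec}(f(A))$ to $\sigma_r(A)$ via Proposition \ref{f=r}, and use similarity invariance of $\sigma_r(A)$ together with the canonical representative $\lambda_q$ with $\operatorname{Im}(\lambda_q)\geq 0$. There is no circularity, since Proposition \ref{f=r} (Lemma 2.5 of \cite{Spec}) is established independently of this existence statement, so your proof stands as a valid self-contained version of the cited result.
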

As a consequence of this result, by applying the usual technique from Complex Linear Algebra, it is possible to prove a quaternionic version of Schur’s triangularization theorem.
\begin{lem}[Lemma 3.3 of \cite{Spec}, Theorem 5.3.6 of \cite{Quat}]\label{lemmA}
Let $A$ be a matrix in $\mathbb{H}^{n \times n}$. Then there are $U, T$ in $\mathbb{H}^{n \times n}$ such that $U$ is unitary, $T$ is an upper triangular matrix and 
\begin{equation}
T = U^*AU.
\end{equation}
Moreover the entries on the diagonal of $T$ are complex numbers $\mu_i$ such that $Im(\mu_i) \geq 0.$
\end{lem}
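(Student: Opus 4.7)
The plan is to proceed by induction on $n$, mimicking the classical complex Schur argument but being careful with the non-commutativity and with the right-eigenvalue convention $Av = v\cdot q$.

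For the base case $n=1$ the matrix $A$ is a single quaternion $a$. By the fact recalled just before the statement (every $q \in \mathbb{H}$ is similar to a unique $\lambda_q \in \mathbb{C}$ with $\mathrm{Im}(\lambda_q)\geq 0$), there is $h \in \mathbb{H}$ with $h^{-1}ah = \lambda_a$. Normalising $h$ so that $\overline{h}h = 1$ gives a unitary $U=(h)$ with $U^*AU = (\lambda_a)$, which is trivially upper triangular and satisfies the diagonal condition.

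For the inductive step, assume the result in dimension $n-1$. Given $A \in \mathbb{H}^{n\times n}$, apply the preceding lemma (Theorem 5.3.5 of \cite{Quat}) to obtain a right eigenvalue $\mu_1 \in \mathbb{C}$ with $\mathrm{Im}(\mu_1)\geq 0$ and a nonzero vector $v_1 \in \mathbb{H}^n$ with $Av_1 = v_1\cdot \mu_1$. Rescale $v_1$ so that $v_1^* v_1 = 1$, then perform a quaternionic Gram–Schmidt completion to produce $v_2,\dots, v_n \in \mathbb{H}^n$ with $v_r^* v_s = \delta_{rs}$; setting $U_1 := [v_1 \mid v_2 \mid \cdots \mid v_n]$ gives a unitary matrix. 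A direct block computation yields
\begin{equation}
U_1^* A U_1 = \begin{bmatrix} \mu_1 & b^* \\ 0 & A' \end{bmatrix}
\end{equation}
for some $b \in \mathbb{H}^{n-1}$ and $A' \in \mathbb{H}^{(n-1)\times(n-1)}$, the zero column under $\mu_1$ being forced by the eigenvalue relation together with the orthonormality of the $v_r$.

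By the inductive hypothesis there exist $U' \in \mathbb{H}^{(n-1)\times(n-1)}$ unitary and $T'$ upper triangular with diagonal complex entries of nonnegative imaginary part such that $T' = (U')^* A' U'$. Set
\begin{equation}
U_2 := \begin{bmatrix} 1 & 0 \\ 0 & U' \end{bmatrix}, \qquad U := U_1 U_2.
\end{equation}
Then $U$ is unitary and $U^*AU$ is upper triangular with $\mu_1$ in the top-left corner and the diagonal of $T'$ below, so every diagonal entry is a complex number with nonnegative imaginary part, completing the induction.

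The only step that requires genuine care, and hence is the main obstacle, is the quaternionic Gram–Schmidt completion and the accompanying block decomposition: one must check that the formulas $w_r \mapsto w_r - \sum_{s<r} v_s (v_s^* w_r)$ still produce an orthonormal family when scalars act on the right (so that $v_s^*(v_s (v_s^* w_r)) = v_s^* w_r$ uses only associativity, not commutativity), and that consequently the first column of $U_1^*AU_1$ really equals $(\mu_1, 0, \dots, 0)^\top$. Once this right-scalar bookkeeping is handled, everything else is a verbatim translation of the complex Schur argument.
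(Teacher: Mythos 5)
Your induction is correct and is exactly the ``usual technique from Complex Linear Algebra'' that the paper invokes: the paper itself gives no proof, citing Lemma 3.3 of Farenick--Pidkowich and Theorem 5.3.6 of Rodman, whose argument is precisely your Schur-type induction based on the existence of a complex right eigenvalue with nonnegative imaginary part. The only detail worth making explicit is that the normalisation of $v_1$ is by the real scalar $\|v_1\|^{-1}$, which commutes with everything and hence preserves the relation $Av_1=v_1\mu_1$ with the same $\mu_1$, whereas a general quaternionic rescaling would replace $\mu_1$ by a similar quaternion.
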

\subsection{Quaternionic dimension of a right $\mathbb{H}$-vector space}
Our next step is to define the geometric multiplicity of a right eigenvalue $q$ of a quaternionic matrix.
\begin{defn}
An additive abelian group $V$ is a right $\mathbb{H}$-vector space if there is a map $V \times \mathbb{H} \longrightarrow V$ under which the image of each pair $(v, q)$ is denoted by $vq$, such that
for all $q, q_1, q_2$ in $\mathbb{H}$ and $v, v_1, v_2$ in $V$:
\begin{itemize}
    \item $(v_1 + v_2)q = v_1q + v_2q$,
    \item $v(q_1 + q_2) = v q_1 + v q_2$,
    \item $v(q_1q_2) = (v q_1)q_2$, and
    \item $v1 = v$.
\end{itemize}
\end{defn}
\begin{defn}
If $V$ is a nonzero right $\mathbb{H}$-vector space and if $X \subseteq V$ is a nonempty subset, then:
\begin{enumerate}
    \item $X$ is a \textbf{generating set} for $V$ if for each $v$ in $V$, 
    \begin{equation}
       v = v_1 q_1 + ... + v_kq_k,
    \end{equation}
    where $v_i \in X$ and $q_i \in \mathbb{H}$  for each $i = 1, ..., k$,
    \item $X$ is an $\mathbb{H}$\textbf{-independent set} if, for any $k \in \mathbb{N}_{>0}$ and for any distinct $v_1, ..., v_k$ in $V$ the equation 
    \begin{equation}
        v_1 q_1 + ... + v_kq_k = 0
    \end{equation}
    is satisfied only for $q_1 = ...= q_k = 0$.
    \item $X$ is a \textbf{basis} for $V$ if $X$ is generating and $\mathbb{H}$-independent set.
\end{enumerate}
\end{defn}
\begin{rem}
Every generating space $X$ of a right $\mathbb{H}$-vector space $V$ contains a basis of $V$: the proof is exactly the same as for real vector spaces.
\end{rem}
\begin{thm}[Theorem 4.2 of \cite{Spec}]
If a right  $\mathbb{H}$-vector space $V$ has a generating set with $m$ elements, then $V$ has a basis with $n$ elements for some $n \leq m$. Moreover:
\begin{enumerate}
\item every basis of $V$ has $n$ elements;
\item if $1 \leq k \leq n$ and if $v_1, ..., v_k $ in $V$ are $\mathbb{H}$-independent, then there is a basis of $V$ that contains the vectors $v_1, ..., v_k $.
\end{enumerate}
\end{thm}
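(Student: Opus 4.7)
The plan is to adapt the classical Steinitz exchange argument from linear algebra over a field to the division ring $\mathbb{H}$. The only subtlety is to keep track of the side on which scalars act; since $\mathbb{H}$ is a division ring, every nonzero quaternion has a two-sided inverse, so whenever an element of a generating set appears with a nonzero coefficient $q_j$ on the right in some right-linear combination, we may isolate it by multiplying on the right by $q_j^{-1}$. Using this, the standard arguments carry over almost verbatim.

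First, using the remark just above the theorem, I would extract a basis from the given generating set $\{w_1,\ldots,w_m\}$ by successively discarding each $w_i$ that is a right-linear combination of the remaining vectors. This produces a basis $B=\{b_1,\ldots,b_n\}$ with $n\leq m$.

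Next, I would prove the following right-$\mathbb{H}$ Steinitz exchange lemma: if $\{u_1,\ldots,u_p\}$ is $\mathbb{H}$-independent and $\{w_1,\ldots,w_m\}$ is a generating set of $V$, then $p\leq m$ and, after a suitable relabelling of the $w_i$'s, the set $\{u_1,\ldots,u_p,w_{p+1},\ldots,w_m\}$ is still generating. The induction on $p$ is the usual one. For the inductive step, by the hypothesis at stage $p-1$ I can write
\begin{equation}
u_p \;=\; u_1\alpha_1+\cdots+u_{p-1}\alpha_{p-1}+w_p\beta_p+\cdots+w_m\beta_m.
\end{equation}
The coefficients $\beta_p,\ldots,\beta_m$ cannot all vanish, otherwise $\{u_1,\ldots,u_p\}$ would fail to be $\mathbb{H}$-independent. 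Pick $j$ with $\beta_j\neq 0$, relabel so that $j=p$, and multiply on the right by $\beta_p^{-1}$ to express $w_p$ as a right-linear combination of $u_1,\ldots,u_p,w_{p+1},\ldots,w_m$. Substituting this back into an arbitrary representation of a vector $v\in V$ in terms of $w_1,\ldots,w_m$ yields a representation in terms of $\{u_1,\ldots,u_p,w_{p+1},\ldots,w_m\}$, establishing the step. In particular $p\leq m$.

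Claim (1) follows at once: if $B'$ is any other basis, applying the exchange lemma with the independent set $B$ against the generating set $B'$ gives $|B|\leq |B'|$, and the symmetric application yields $|B'|\leq |B|$, hence every basis has exactly $n$ elements. For claim (2), given an $\mathbb{H}$-independent set $v_1,\ldots,v_k$ with $k\leq n$, I would apply the exchange lemma with these vectors against $B$ to obtain, after relabelling, a generating set $\{v_1,\ldots,v_k,b_{k+1},\ldots,b_n\}$ of exactly $n$ elements. Since this generating set contains a basis, and every basis has $n$ elements by (1), the set must itself be a basis, completing the proof. The only real hazard throughout is the non-commutativity when rewriting right-linear combinations; but as long as every inversion is performed on the right, the classical arguments transfer without change.
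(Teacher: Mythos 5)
Your argument is correct, but note that the paper itself offers no proof of this statement: it is imported verbatim as Theorem 4.2 of the cited work of Farenick and Pidkowich, so there is no internal proof to compare against. What you have written is the standard Steinitz exchange argument transported to a right $\mathbb{H}$-vector space, and it works precisely for the reason you isolate: $\mathbb{H}$ is a division ring, so a nonzero right coefficient $\beta_p$ can be cancelled by right multiplication by $\beta_p^{-1}$, and the module axioms $v(q_1q_2)=(vq_1)q_2$ and distributivity let you rewrite $w_p=(u_p-\sum_i u_i\alpha_i-\sum_{j>p}w_j\beta_j)\beta_p^{-1}$ as a genuine right-linear combination; commutativity is never used. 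This is essentially the proof given in the cited source, so you have not taken a genuinely different route, only supplied the details the paper delegates to its reference. Two small points you may want to make explicit: before applying the exchange lemma ``with $B$ against $B'$'' you should first observe that any basis $B'$ is finite (every finite independent subset of $B'$ has at most $n$ elements by exchange against the finite generating set $B$, so $|B'|\leq n$), since your lemma is stated only for finite generating sets; and in claim (2) the relabelled set $\{v_1,\ldots,v_k,b_{k+1},\ldots,b_n\}$ could a priori contain coincidences, but your closing observation --- that it is a generating set with at most $n$ elements containing a basis, which by (1) must have exactly $n$ elements --- already disposes of this. With those remarks the proof is complete.
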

\begin{defn}
Let $V$ be a right $\mathbb{H}$-vector space and let $X$ be a basis for $V$. Then the \textbf{quaternionic dimension} of $V$ is the cardinality of $X$. We denote it by $dim_{\mathbb{H}}(V)$.
\end{defn}
\begin{rem}
Let $A$ be a matrix in $\mathbb{H}^{n \times n}$. Let $q$ be a right eigenvalue of $A$. Let us denote by $V_q$ the right $q$-eigenspace, i.e. the subset of $\mathbb{H}^{n}$ given by the right $q$-eigenvectors. Notice that $V_q$ is a real vector space. Unfortunately it is not a right $\mathbb{H}$-vector space, indeed if $v$ is a $q$-eigenvector and $q_2$ is an element of $\mathbb{H}$, then $vq_2$ is a right $q_2^{-1} q q_2$-eigenvalue.
\end{rem}
\section{Multiplicities of a right eigenvalue}\label{2}
\subsection{Algebraic multiplicity}
Our next step is to define the algebraic multiplicity for a right eigenvalue of a quaternion matrix.
\begin{defn}\label{m.alg}
Let $A$ be a matrix in $\mathbb{H}^{n \times n}$ and let $q$ be a right eigenvalue of $A$. Let $\lambda_q$ be the complex number such that $Im(\lambda_q) \geq 0$ and $[q] = [\lambda_q]$. Let $U$ and $T = U^*AU$ be two matrices satisfying Lemma \ref{lemmA}. Then the \textbf{algebraic multiplicity } of $q$ (or $a.m.(q, A)$) is the number of occurrences of $\lambda_q$ in the diagonal of $T$.
\end{defn}
\begin{prop}\label{algeq}
The Definition \ref{m.alg} is well-given and it does not depend on the choice of $T$. Moreover, if $q$ is a right eigenvalue of a quaternionic matrix $A$ such that $[q] = [\lambda_q]$ where $\lambda_q \in \mathbb{C}$ and $Im(\lambda_q) \geq 0$, then 
\begin{equation}\label{uno}
a.m.(q, A) = a.m.(\lambda_q, f(A)) = a.m.(\lambda_q, \pi(A))
\end{equation}
if $Im(\lambda_q) \neq 0$, and 
\begin{equation}\label{due}
a.m.(q, A) = \frac{1}{2}a.m.(\lambda_q, f(A)) = \frac{1}{2}a.m.(\lambda_q, \pi(A))
\end{equation}
if $\lambda_q$ is a real number.
\end{prop}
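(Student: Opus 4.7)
The plan is to reduce everything to the complex matrix $\pi(T)$, which is block upper triangular with diagonal blocks whose eigenvalues we can read off directly. Since Proposition \ref{f=pi} already gives $a.m.(\lambda_q, f(A)) = a.m.(\lambda_q, \pi(A))$, I only need to prove the stated equalities for one of the two, say $\pi(A)$.

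First I would show that $a.m.(\lambda_q, \pi(A)) = a.m.(\lambda_q, \pi(T))$. Since $U$ is unitary in $\mathbb{H}^{n\times n}$, the functorial properties recalled just before Proposition \ref{f=r} together with Proposition \ref{f=pi} force $\pi(U)$ to be unitary in $\mathbb{C}^{2n \times 2n}$ (in particular invertible), and the identity $T = U^*AU$ transports to $\pi(T) = \pi(U)^*\pi(A)\pi(U)$. Complex similarity preserves algebraic multiplicities, so $a.m.(\lambda_q, \pi(A)) = a.m.(\lambda_q, \pi(T))$.

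Next I would analyze $\pi(T)$. Because $T = (t_{rs})$ is upper triangular with $t_{rs} = 0$ for $r > s$, we have $\pi(t_{rs}) = 0_{2\times 2}$ for $r > s$, so $\pi(T)$ is block upper triangular with $2\times 2$ blocks. Each diagonal entry $t_{rr} = \mu_r$ is a complex number (no $j$-part), so
\begin{equation}
\pi(t_{rr}) = f(\mu_r) = \begin{bmatrix} \mu_r & 0 \\ 0 & \overline{\mu_r} \end{bmatrix}.
\end{equation}
Therefore the spectrum of $\pi(T)$, counted with algebraic multiplicity, is the multiset $\{\mu_1, \overline{\mu_1}, \dots, \mu_n, \overline{\mu_n}\}$.

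Finally I would do a two-case count using the fact, guaranteed by Lemma \ref{lemmA}, that $Im(\mu_r) \geq 0$ for every $r$. Let $N := \#\{r : \mu_r = \lambda_q\}$, which by Definition \ref{m.alg} is precisely $a.m.(q,A)$. If $Im(\lambda_q) > 0$, then $\overline{\lambda_q}$ has negative imaginary part and no $\overline{\mu_r}$ can coincide with $\lambda_q$ (that would force $Im(\mu_r) < 0$), so $a.m.(\lambda_q, \pi(T)) = N = a.m.(q,A)$, which gives \eqref{uno}. If instead $\lambda_q \in \mathbb{R}$, then $\overline{\lambda_q} = \lambda_q$ and each $r$ with $\mu_r = \lambda_q$ contributes one occurrence via $\mu_r$ and one via $\overline{\mu_r}$, so $a.m.(\lambda_q, \pi(T)) = 2N = 2\,a.m.(q,A)$, giving \eqref{due}. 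Since the right-hand sides of \eqref{uno} and \eqref{due} depend only on $A$, well-definedness of Definition \ref{m.alg} is an automatic byproduct.

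The main obstacle I would expect is purely bookkeeping: one must resist thinking of $f(T)$ itself as triangular (its lower-left block $-\overline{T_2}$ is generally nonzero because the strictly upper-triangular entries of $T$ carry $j$-components), and instead use $\pi(T)$, whose $2\times 2$ block structure inherits the triangularity of $T$ cleanly. The case split $Im(\lambda_q) > 0$ versus $\lambda_q \in \mathbb{R}$, together with the sign constraint $Im(\mu_r) \geq 0$, is then the decisive (and only) arithmetic input.
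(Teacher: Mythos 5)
Your proposal is correct and follows essentially the same route as the paper: transport the problem to $\pi(T)$ via the similarity $\pi(T)\sim f(T)=f(U)^*f(A)f(U)\sim f(A)\sim\pi(A)$, use the block upper triangular structure of $\pi(T)$ with diagonal blocks $\begin{bmatrix}\mu_r & 0\\ 0 & \overline{\mu_r}\end{bmatrix}$, and count occurrences of $\lambda_q$ using the constraint $Im(\mu_r)\geq 0$ to split the real and non-real cases. The only cosmetic difference is that the paper checks independence of the choice of $T$ by exhibiting the similarity $T_1=U_1^*UTU^*U_1$ directly, whereas you obtain it as a byproduct of the equalities with $a.m.(\lambda_q,\pi(A))$, which depends only on $A$; both are fine.
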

\begin{proof}
Observe that $T = U^* A U$ implies that $\pi(T)$ and $\pi(A)$ are similar, indeed $\pi(T) \sim f(T) = f(U)^*f(A)f(U) \sim f(A) \sim \pi(A)$. So, if $[q]$ is a right eigenvalue of $A$, then $\lambda_q$ is an eigenvalue for $\pi(A)$ and also for $\pi(T)$. This means that $\lambda_q$ occurs in the diagonal of $\pi(T)$. Indeed $\pi(T)$ is a triangular matrix since the $i$-th diagonal block of $\pi(T)$ is $\begin{bmatrix}\mu_i & 0 \\ 0 & \overline{\mu_i}\end{bmatrix}$ where $\mu_i$ are the diagonal entries of $T$. So $\lambda_q$ occurs also in the diagonal of $T$ and, in particular, 
\begin{equation}
a.m.(q, A) = a.m.(\lambda_q, \pi(T)) = a.m.(\lambda_q, \pi(A)) = a.m.(\lambda_q, f(A))
\end{equation}
if $Im(\lambda) \neq 0$, and 
\begin{equation}
a.m.(q, A) = \frac{1}{2} a.m.(\lambda_q, \pi(T)) = \frac{1}{2}a.m.(\lambda, \pi(A)) = \frac{1}{2}a.m.(\lambda, f(A))
\end{equation}
otherwise.
\\Finally, if there are some matrices $U_1$ and $T_1 \neq T$ satisfying the Lemma \ref{lemmA}, then $T_1 = U_1^*U TU^*U_1$. So $\pi(T_1)$ and $\pi(T)$ are similar because $f(T_1)$ and $f(T)$ are similar. 
\\This proves that the definition of algebraic multiplicity is well-given.
\end{proof}
\subsection{Geometric multiplicity}
Our next step is to define the geometric multiplicity of a right eigenvalue $q$ of
a quaternionic matrix.
\begin{defn}
Let $A$ be a matrix in $\mathbb{H}^{n \times n}$ and let $q$ be a right eigenvalue of $A$. Then the $[q]$-\textbf{eigenspace} is the subset of $\mathbb{H}^n$
\begin{equation}
V_{[q]} := \sum\limits_{\tilde{q} \in [q]}V_{\tilde{q}} 
\end{equation}
which is the space of all finite sums of vectors $v_1 + ... + v_k$ where for each $i$ the vector $v_i$ is in $V_{q_i}$ for some $q_i$.
\end{defn}
\begin{rem}
Notice that $V_{[q]}$ is a right $\mathbb{H}$-vector space. Moreover, for each $\tilde{q}$ in $[q]$ is possible to find a basis of $V_{[q]}$ contained in $V_{\tilde{q}}$. Indeed if $v$ is in $V_{\tilde{q}}$ where $s\tilde{q}s^{-1} = q$ for some $s$ in $\mathbb{H}$, then $vs^{-1}$ is an element of $V_q$. Then, since $V_q$ is a real vector space, then we can find a basis\footnote{in the real vector spaces sense} $\{e_i\}$ of $V_q$ and some real coefficients $\{a^i\}$ such that $vs^{-1} = e_1a^1 + ... + e_ka^k$ and so
\begin{equation}
    v = e_1(a^1s) + ... + e_k(a^ks)
\end{equation}
This means that $\{e_i\}$ is a generating set for $V_{[q]}$ and so it contains a basis for the right $\mathbb{H}$-vector space $V_{[q]}$.
\end{rem}
\begin{defn}\label{m.geom}
Let $q$ be a right eigenvalue of $A$ in $\mathbb{H}^{n \times n}$. The \textbf{geometric multiplicity} of $q$ is
\begin{equation}
g.m.(q, A) := dim_{\mathbb{H}}V_{[q]}.
\end{equation}
\end{defn}
\begin{rem}
Let $A$ and $D$ be two quaternionic matrix. Assume the existence of an invertible matrix $U$ such that $D = U^{-1} A U$. Then the eigenvalues of $A$ and of $D$ are the same and they also have the same geometric multiplicity. Indeed, given an eigenvalue $q$ of $A$ and given $\{x_i\}$ a basis of the $[q]$-eigenspace, then $\{Ux_i\}$ is a basis for the $[q]$-eigenspace of $A$.
\end{rem}
\begin{prop}\label{geomeq}
Let $A$ be a matrix in $\mathbb{H}^{n \times n}$ and let $q$ be a right eigenvalue. Let $\lambda_q$ be the element of $\mathbb{C}$ such that $Im(\lambda_q) \geq 0 $ and $[q] = [\lambda_q]$. Then, if $Im(\lambda_q) \neq 0$
\begin{equation}
    g.m.(q, A) = g.m.(\lambda_q, f(A)) = g.m.(\lambda_q, \pi(A))
\end{equation}
and, if $\lambda_q$ is real,
\begin{equation}
    g.m.(q, A) = \frac{1}{2}g.m.(\lambda_q, f(A)) = \frac{1}{2}g.m.(\lambda_q, \pi(A)).
\end{equation}
\end{prop}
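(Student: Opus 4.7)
The plan is to combine two constructions. First, applying Proposition \ref{f=pi} reduces everything to $f(A)$, since $g.m.(\lambda_q,\pi(A))=g.m.(\lambda_q,f(A))$. The core of the argument will be a $\mathbb{C}$-linear bijection $T$ between the ordinary right $\lambda_q$-eigenspace $V_{\lambda_q}\subset\mathbb{H}^n$, which carries a right $\mathbb{C}$-action because $A(xc)=(Ax)c=x\lambda_q c=(xc)\lambda_q$ for $c\in\mathbb{C}$, and the complex $\lambda_q$-eigenspace $E\subset\mathbb{C}^{2n}$ of $f(A)$. Writing $x=x_1+x_2 j$ with $x_1,x_2\in\mathbb{C}^n$ and $A=A_1+A_2 j$, I would define
\[
T(x):=\begin{bmatrix} x_1 \\ -\overline{x_2}\end{bmatrix},
\]
then verify, using the identity $jc=\overline{c}j$ for $c\in\mathbb{C}$, that $Ax=x\lambda_q$ is equivalent to $f(A)T(x)=\lambda_q T(x)$. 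The map is visibly $\mathbb{C}$-linear and admits the inverse $\begin{bmatrix} y_1\\ y_2\end{bmatrix}\mapsto y_1-\overline{y_2}j$, giving $\dim_{\mathbb{C}} V_{\lambda_q}=g.m.(\lambda_q,f(A))$.

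The second step compares $\dim_{\mathbb{H}} V_{[q]}$ with $\dim_{\mathbb{C}} V_{\lambda_q}$. By the remark after Definition \ref{m.geom} one can pick an $\mathbb{H}$-basis $\{e_1,\dots,e_m\}$ of $V_{[q]}$ lying entirely inside $V_{\lambda_q}$, and I would split into two cases. If $Im(\lambda_q)>0$, I claim $\{e_1,\dots,e_m\}$ is already a $\mathbb{C}$-basis of $V_{\lambda_q}$: for $v\in V_{\lambda_q}$ expand $v=\sum e_i q_i$ with $q_i=c_i+d_i j$, so that $v=\sum e_i c_i+\sum e_i d_i j$, where the first sum lies in $V_{\lambda_q}$ and the second in $V_{\overline{\lambda_q}}$ since $A(wj)=w\lambda_q j=(wj)\overline{\lambda_q}$ for $w\in V_{\lambda_q}$. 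Because $\lambda_q\neq\overline{\lambda_q}$ these eigenspaces meet only at $0$, forcing $\sum e_i d_i j=0$ and hence, by $\mathbb{H}$-independence, $d_i=0$ for each $i$; $\mathbb{C}$-independence of $\{e_i\}$ is then immediate. This yields $g.m.(q,A)=m=\dim_{\mathbb{C}} V_{\lambda_q}=g.m.(\lambda_q,f(A))$.

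If instead $\lambda_q\in\mathbb{R}$, then $[q]=\{\lambda_q\}$ and $V_{[q]}=V_{\lambda_q}$ is itself a right $\mathbb{H}$-vector space, so $m=g.m.(q,A)$. I would then verify that $\{e_1,e_1 j,\dots,e_m,e_m j\}$ is a $\mathbb{C}$-basis of $V_{\lambda_q}$: every $q\in\mathbb{H}$ is $c+d j$ with $c,d\in\mathbb{C}$, and $e_i(c+d j)=e_i c+(e_i j)\overline{d}$ gives generation, while $\mathbb{C}$-independence reduces to $\mathbb{H}$-independence of the $e_i$ after regrouping into $a_i+\overline{b_i}j$. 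This produces $\dim_{\mathbb{C}} V_{\lambda_q}=2m$, hence the factor $\frac{1}{2}$.

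The main obstacle is really just the bookkeeping forced by non-commutativity: the identity $jc=\overline{c}j$ repeatedly injects conjugates into the eigenvalue equations, so one must be vigilant that the candidate inverse of $T$, the eigenspace containment $wj\in V_{\overline{\lambda_q}}$, and the independence arguments all respect the side of the scalar action. Once $T$ is constructed and the two basis claims are checked, combining the dimension identities above with Proposition \ref{f=pi} yields both equalities in the statement.
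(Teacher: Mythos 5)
Your argument is correct, and it takes a genuinely different route from the paper's. The paper completes a basis of $V_{[q]}$ lying in $V_{\lambda_q}$ to a basis of $\mathbb{H}^n$, conjugates $A$ into a block triangular matrix $D_\lambda$ whose upper-left block is $\lambda_q Id_d$, and then reads $g.m.(\lambda_q, f(A))$ off $f(D_\lambda)$: the span of suitable standard vectors gives one inequality, and Lemma 2.4 of \cite{Spec} (the correspondence $(w_A,w_B)\mapsto w_A-\overline{w}_B j$), which forces the last coordinates of any $\lambda_q$-eigenvector of $f(D_\lambda)$ to vanish, gives the other. You dispense with the change of basis entirely: your map $T(x)=(x_1,-\overline{x_2})$ is exactly the inverse of that correspondence, but used globally to produce a right $\mathbb{C}$-linear isomorphism from $V_{\lambda_q}$ onto the $\lambda_q$-eigenspace of $f(A)$, so that $g.m.(\lambda_q,f(A))=\dim_{\mathbb{C}}V_{\lambda_q}$; the proposition then reduces to the purely algebraic comparison of $\dim_{\mathbb{C}}V_{\lambda_q}$ with $\dim_{\mathbb{H}}V_{[q]}$, which you carry out correctly in both cases (via $V_{\lambda_q}\cap V_{\overline{\lambda}_q}=\{0\}$ when $Im(\lambda_q)>0$, and via the $\mathbb{C}$-basis $\{e_1,e_1j,\dots,e_m,e_mj\}$ when $\lambda_q$ is real). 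What your approach buys is an intrinsic, coordinate-free argument with an explicit identification of the two eigenspaces, at the small cost of verifying $\mathbb{C}$-linearity, injectivity and surjectivity of $T$ by hand (the paper outsources this to the cited lemma); what the paper's approach buys is that the same adapted-basis construction is the template reused for the algebraic multiplicity and the inequality $a.m.\geq g.m.$ Both versions then pass to $\pi(A)$ through Proposition \ref{f=pi}, as you note.
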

\begin{proof}
Fix a right eigenvalue $q$ of $A$. Then choose a basis $\{v_1, ..., v_d\}$ of $V_{[q]}$ which is contained in $V_{\lambda_q}$ and complete it in order to obtain a basis $X = \{v_1, ..., v_d, x_{d+1}, ..., x_n\} =: \{s_i\}$ of $\mathbb{H}^{n}$. Let $\mathcal{C}_{X}$ be the matrix which has in the $i$-th column the vector $s_i$ of $X$. Let $b_i$ be the vector in $\mathbb{H}^n$ which has the $i$-th entry equal to $1$ and $0$ otherwise. Observe that $\mathcal{C}_{X}$ is invertible and its inverse is given by the matrix which has in the $j$-th column the components of $b_i$ with respect to $X$. Then, we define
\begin{equation}
D_\lambda := \mathcal{C}_{X}^{-1} \cdot A \cdot \mathcal{C}_{X}. 
\end{equation}
The matrix $D_\lambda$ has the form
\begin{equation}
D_\lambda = \begin{bmatrix} \lambda_q Id_d & B_1 \\
0 & B_2
\end{bmatrix}
\end{equation}
The matrix $f(D_\lambda) = f(\mathcal{C}_{X}^{-1})f(A)f(\mathcal{C}_{X})$ is given by
\begin{equation}
f(D_\lambda) := \begin{bmatrix}\lambda_q Id_d & \star & 0 & \star \\
0 & \star & 0 &\star\\
0 & \star & \overline{\lambda}_q Id_d & \star\\
0 & \star & 0 &\star\end{bmatrix}.
\end{equation}
Since $f(A)$ is similar to $f(D_\lambda)$, we obtain that
\begin{equation}
g.m.(\lambda_q, f(D_\lambda)) = g.m.(\lambda_q, f(A)) \geq g.m.(q, A)   
\end{equation}
if $Im(\lambda_q) > 0$ and
\begin{equation}
g.m.(\lambda_q, f(D_\lambda)) = g.m.(\lambda_q, f(A)) \geq 2g.m.(q, A)   
\end{equation}
if $\lambda_q$ is real. Indeed if $\lambda_q$ has $Im(\lambda_q) >0 $ then the $\lambda_q$-eigenspace of $f(D_\lambda)$ contains $Span(e_1, ..., e_d)$ where $e_i$ is the vector which has $1$ in the $i$-the entry and zero otherwise.
On the other hand, if $\lambda_q$ is real, then the $\lambda_q$-eigenspace of $f(D_\lambda)$ contains $Span(e_1, ..., e_d, e_{n+1}, ..., e_{n+d})$.
\\For the other inequality, assume that there is a vector $w = (w_A, w_B)$ in $\mathbb{C}^{2n}$ such that $f(D_\lambda)w = w \lambda_q$. By Lemma 2.4 of \cite{Spec}, we know that $y := w_A - \overline{w}_B \cdot j$ in $\mathbb{H}^n$ is a vector in $V_{\lambda_q}$. This means that, with respect to the basis $X$,
\begin{equation}
\begin{split}
y &= v_1a^1 + ... + v_da^d  +  x_{d+1} \cdot 0 + ... + x_n\cdot 0\\
&= v_1(w_A^1 - \overline{w}_B^1j) + ... + v_d(w_A^d - \overline{w}_B^dj)\\
&+ x_{d+1} \cdot (w_A^{d+1} - \overline{w}_B^{d+1}j) + ... + x_n\cdot (w_A^{n} - \overline{w}_B^{n}j).
\end{split}
\end{equation}
So, for each $r = d+1, ..., n$, we obtain that $w_A^r = w_B^r = 0$. This means that $w$ is in $Span(e_1, ..., e_d, e_{n+1}, ..., e_{n+d})$. This is sufficient to conclude if $\lambda_q$ is real. If $\lambda_q$ is not real, observe that $Span(e_{n+1}, ..., e_{n+d})$ is contained in the $\overline{\lambda}$-eigenspace of $D_\lambda$ and so, in particular, $w$ is in $Span(e_1, ..., e_d)$.
\end{proof}
\subsection{Main properties of multiplicities}
The first property of algebraic and geometric multiplicities is the classical inequality between them.
\begin{prop}
Let $A \in \mathbb{H}^{n \times n}$. Then, if $q$ is a right eigenvalue of $A$, then
\begin{equation}
    a.m.(q) \geq g.m.(q).
\end{equation}
\end{prop}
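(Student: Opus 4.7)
The plan is to reduce everything to the already-established correspondence between the quaternionic multiplicities of $q$ and the complex multiplicities of $\lambda_q$ for the complex adjoint matrix $f(A)$, and then invoke the classical complex inequality.

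More precisely, I would first split into the two cases coming from whether $Im(\lambda_q)$ is zero or not. If $Im(\lambda_q) \neq 0$, Proposition \ref{algeq} gives $a.m.(q,A) = a.m.(\lambda_q, f(A))$ and Proposition \ref{geomeq} gives $g.m.(q,A) = g.m.(\lambda_q, f(A))$, so the desired inequality is a direct consequence of the standard inequality $a.m.(\lambda_q, f(A)) \geq g.m.(\lambda_q, f(A))$ for the complex matrix $f(A) \in \mathbb{C}^{2n \times 2n}$.

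If instead $\lambda_q$ is real, then again by Propositions \ref{algeq} and \ref{geomeq} both multiplicities pick up the same factor $\tfrac{1}{2}$:
\begin{equation}
a.m.(q,A) = \tfrac{1}{2} a.m.(\lambda_q, f(A)), \qquad g.m.(q,A) = \tfrac{1}{2} g.m.(\lambda_q, f(A)),
\end{equation}
so dividing the classical complex inequality by $2$ yields the claim.

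There is essentially no obstacle here; the content has been front-loaded into Propositions \ref{algeq} and \ref{geomeq}. The only thing to be careful about is making sure the same $\lambda_q$ with $Im(\lambda_q) \geq 0$ is used consistently on both sides (which is forced by the definitions) and that the case distinction $Im(\lambda_q) \neq 0$ versus $\lambda_q \in \mathbb{R}$ is handled uniformly, since the $\tfrac{1}{2}$ factor appears in both multiplicities simultaneously.
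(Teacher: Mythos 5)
Your proof is correct and is exactly the intended argument: the paper states this proposition without an explicit proof, precisely because it follows immediately from Propositions \ref{algeq} and \ref{geomeq} together with the classical inequality $a.m.(\lambda_q, f(A)) \geq g.m.(\lambda_q, f(A))$ for the complex matrix $f(A)$, with the factor $\tfrac{1}{2}$ cancelling in the real case just as you note.
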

The second one is about the invariance of multiplicities
\begin{prop}\label{inv}
Let $A$ and $B$ two matrices in $\mathbb{H}^{n \times n}$. Assume that they are similar. Then their right spectrum coincides and if $q$ is an eigenvalue of $A$ and $B$, then
\begin{equation}
    a.m.(q, A) = a.m.(q, B) \mbox{   and   } g.m.(q, A) = g.m.(q, B).
\end{equation}
\end{prop}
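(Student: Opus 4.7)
The plan is to reduce everything to facts already established about the complex adjoint $f(A)$ and about the right eigenspaces. Write $B = U^{-1} A U$ for an invertible $U \in \mathbb{H}^{n\times n}$.

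First I would check the equality of the right spectra. If $Ax = xq$ with $x \neq 0$, then setting $y := U^{-1}x \neq 0$ one computes
\begin{equation}
By = U^{-1}AUU^{-1}x = U^{-1}Ax = U^{-1}(xq) = (U^{-1}x)q = yq,
\end{equation}
so $q \in \sigma_r(B)$. The reverse inclusion follows by interchanging the roles of $A$ and $B$.

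For the geometric multiplicity I would use directly the remark following Definition \ref{m.geom}: the map $v \mapsto U^{-1}v$ is right $\mathbb{H}$-linear and bijective, and the computation above shows that it sends $V_{\tilde{q}}(A)$ onto $V_{\tilde{q}}(B)$ for every $\tilde{q} \in [q]$. Hence it restricts to a right-$\mathbb{H}$-linear isomorphism $V_{[q]}(A) \to V_{[q]}(B)$, which sends a basis to a basis and gives $g.m.(q,A) = g.m.(q,B)$.

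For the algebraic multiplicity I would translate the similarity via $f$. Because $f$ is multiplicative and preserves inverses (Proposition in Section \ref{basi}), $B = U^{-1}AU$ implies $f(B) = f(U)^{-1}f(A)f(U)$, so $f(A)$ and $f(B)$ are similar as complex matrices. Classical complex linear algebra then gives $a.m.(\lambda_q, f(A)) = a.m.(\lambda_q, f(B))$. Applying Proposition \ref{algeq} to both $A$ and $B$ (with the same factor $1$ if $Im(\lambda_q) > 0$ and the same factor $\frac{1}{2}$ if $\lambda_q \in \mathbb{R}$) yields $a.m.(q,A) = a.m.(q,B)$.

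I do not expect a serious obstacle here: the statement is essentially bookkeeping once one knows that $f$ intertwines quaternionic similarity with complex similarity and that right multiplication by $U^{-1}$ commutes with the right $\mathbb{H}$-action on eigenvectors. The only mild point is to observe that the isomorphism $v \mapsto U^{-1}v$ does map the whole space $V_{[q]}$ (a sum over the similarity class) to itself, which is immediate because $q$ is fixed under conjugation by $U^{-1}$ in the sense used above, i.e. each individual $V_{\tilde q}$ is sent to $V_{\tilde q}$.
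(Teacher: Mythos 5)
Your argument is correct and is essentially the one the paper intends: the paper states this proposition without a separate proof, the geometric part being exactly the remark after Definition \ref{m.geom} (the right-$\mathbb{H}$-linear bijection $v \mapsto U^{-1}v$ carrying each $V_{\tilde q}$ of $A$ onto the corresponding $V_{\tilde q}$ of $B$, hence a basis of $V_{[q]}$ to a basis), and the algebraic part following from the multiplicativity of $f$ together with Proposition \ref{algeq}, just as you argue. The only cosmetic slip is the phrase ``maps the whole space $V_{[q]}$ to itself'': what your computation shows, and what is needed, is that it maps the $[q]$-eigenspace of $A$ onto the $[q]$-eigenspace of $B$.
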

The following Corollary holds when $A$ is a normal matrix. In order to introduce it, we need a result from \cite{Spec}.
\begin{thm}[Theorem 3.3 of \cite{Spec}]\label{Spectral Theorem}
If $A \in \mathbb{H}^{n \times n}$ is normal, i.e. $AA^* = A^*A$ then there are matrices $D$, $U$ in $\mathbb{H}^{n \times n}$ such that 
\begin{itemize}
    \item $U$ is a unitary matrix, $D$ is a diagonal matrix, and $U^*AU = D$;
    \item each diagonal entry of $D$ is a complex number $\lambda$ such that $Im(\lambda) = 0$;
    \item $q \in \mathbb{H}^{n \times n}$ is a right eigenvalue of $A$ if and only if $[q] = [\lambda]$ for some diagonal element $\lambda$ of $D$.
\end{itemize}
\end{thm}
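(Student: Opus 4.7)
The plan is to derive this from Schur's triangularization (Lemma \ref{lemmA}) combined with the observation that a normal upper triangular quaternion matrix must already be diagonal. First I would apply Lemma \ref{lemmA} to obtain a unitary $U$ and an upper triangular $T = U^{*}AU$ whose diagonal entries $\mu_{i}$ are complex numbers with $Im(\mu_{i})\geq 0$. The content of the theorem then amounts to promoting this $T$ to the diagonal matrix $D$ under the extra normality hypothesis.

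The next step is to transfer normality from $A$ to $T$: since $U^{*}U = UU^{*} = Id_{n}$ one computes
\begin{equation}
TT^{*} = U^{*}A U U^{*}A^{*}U = U^{*}AA^{*}U = U^{*}A^{*}AU = U^{*}A^{*}U U^{*}AU = T^{*}T.
\end{equation}
I would then show that a normal upper triangular matrix over $\mathbb{H}$ is diagonal by comparing the diagonal entries of $TT^{*}$ and $T^{*}T$. Writing $T=(t_{rs})$, the $(i,i)$-entry of $TT^{*}$ equals $\sum_{k} t_{ik}\overline{t_{ik}} = \sum_{k}|t_{ik}|^{2}$, which is a real scalar because $q\overline{q}=|q|^{2}\in\mathbb{R}$ for every $q\in\mathbb{H}$; similarly the $(i,i)$-entry of $T^{*}T$ is $\sum_{k}|t_{ki}|^{2}$. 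Upper triangularity collapses these sums to $\sum_{k\geq i}|t_{ik}|^{2}$ and $\sum_{k\leq i}|t_{ki}|^{2}$, and an induction on $i=1,\ldots,n$ kills all strictly above-diagonal entries: the base case $i=1$ yields $\sum_{k>1}|t_{1k}|^{2}=0$, and at step $i$ the vanishing of $t_{ki}$ for $k<i$, established in previous rounds, forces $\sum_{k>i}|t_{ik}|^{2}=0$. This produces the diagonal $D=T$ with complex diagonal entries of non-negative imaginary part.

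For the third bullet on right eigenvalues I would read everything off the diagonal form. Because $A = UDU^{*}$ and conjugation by an invertible quaternion matrix preserves the right spectrum (as observed immediately after the definition of right eigenvalue), the right spectra of $A$ and $D$ coincide. For a diagonal $D$ with entries $\lambda_{1},\ldots,\lambda_{n}$, the equation $Dw = wq$ with $w=(w_{1},\ldots,w_{n})\neq 0$ gives $\lambda_{i}w_{i}=w_{i}q$ componentwise; picking any $i$ with $w_{i}\neq 0$ yields $q = w_{i}^{-1}\lambda_{i}w_{i}$, hence $[q]=[\lambda_{i}]$, while conversely $De_{i} = e_{i}\lambda_{i}$ exhibits each $\lambda_{i}$ as a right eigenvalue.

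The main delicate point I anticipate is the non-commutativity of $\mathbb{H}$ when manipulating entries of $TT^{*}$ and $T^{*}T$; the saving observation is precisely that the squared quaternionic norm $|q|^{2}$ is real, so the diagonal-entry sums are real scalars and the induction argument proceeds exactly as in the classical complex case. Nothing in the argument requires the machinery of Jordan forms or root subspaces, which fits the spirit of the paper.
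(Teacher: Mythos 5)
Your argument is correct, and it is essentially the standard proof of the quaternionic spectral theorem: the paper itself gives no proof of this statement (it is imported verbatim as Theorem 3.3 of the cited Farenick--Pidkowich paper), and the proof there follows the same route you take, namely Schur triangularization via Lemma \ref{lemmA} followed by the observation that a normal upper triangular matrix is diagonal, which your diagonal-entry comparison of $TT^{*}$ and $T^{*}T$ establishes cleanly since each $|t_{rs}|^{2}$ is a nonnegative real. Your handling of the third bullet (componentwise reading of $Dw=wq$ plus closure of the right spectrum under quaternion similarity and under conjugation by an invertible matrix) is also fine.

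One discrepancy worth noting: your construction yields diagonal entries $\lambda$ with $Im(\lambda)\geq 0$, not $Im(\lambda)=0$ as the second bullet literally states. This is not a gap in your argument but a typo in the statement: as written it is false for general normal matrices (take $A=(i)$, whose right eigenvalues are similar to $i$ and hence never real), and the original theorem in the cited source indeed asserts only $Im(\lambda)\geq 0$; the condition $Im(\lambda)=0$, i.e.\ $D$ real, characterizes the Hermitian case. So your proof proves the corrected statement, which is the one actually used later in the paper.
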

\begin{prop}
Let $A \in \mathbb{H}^{n \times n}$ be a normal matrix. Then, if $q$ is a right eigenvalue of $A$, then
\begin{equation}
    a.m.(q) = g.m.(q).
\end{equation}
\end{prop}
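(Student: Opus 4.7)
The plan is to transfer the equality to the complex adjoint matrix $f(A)$, where it becomes an instance of the classical complex spectral theorem, and then pull the equality back to $A$ using Propositions \ref{algeq} and \ref{geomeq}.

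First I would check that $f(A)$ is a normal complex $2n \times 2n$ matrix. This is immediate from the functoriality of $f$ listed after the definition of the complex adjoint: since $f$ is multiplicative and satisfies $f(A^*) = f(A)^*$, the assumption $AA^* = A^*A$ yields
\begin{equation}
f(A)\,f(A)^* \;=\; f(AA^*) \;=\; f(A^*A) \;=\; f(A)^*\,f(A).
\end{equation}

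Next I would invoke the classical complex spectral theorem: a normal matrix in $\mathbb{C}^{2n \times 2n}$ is diagonalizable, so for every complex eigenvalue $\mu$ of $f(A)$ one has $a.m.(\mu, f(A)) = g.m.(\mu, f(A))$. Applying this to $\mu = \lambda_q$, where $\lambda_q$ is the unique element of $\mathbb{C}$ with $Im(\lambda_q) \geq 0$ and $[q] = [\lambda_q]$, yields the equality of the two complex multiplicities attached to $\lambda_q$.

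Finally I would transfer the equality back to the quaternionic side using Propositions \ref{algeq} and \ref{geomeq}. If $Im(\lambda_q) > 0$, both propositions identify the quaternionic multiplicities of $q$ directly with the corresponding multiplicities of $\lambda_q$ for $f(A)$, so the result follows verbatim. If $\lambda_q \in \mathbb{R}$, both propositions halve the complex multiplicities, so the equality still propagates after dividing both sides by $2$. In each case we conclude $a.m.(q, A) = g.m.(q, A)$.

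There is no serious obstacle here: the heavy lifting has already been done in establishing Propositions \ref{algeq} and \ref{geomeq} and the functoriality of $f$. The only point that needs a moment of care is the separation into the cases $Im(\lambda_q) > 0$ and $\lambda_q \in \mathbb{R}$, but both reduce to the same complex fact about normal matrices and the rescaling factors $\tfrac{1}{2}$ match on the two sides.
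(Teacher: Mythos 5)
Your proof is correct, but it takes a different route from the one the paper sets up. The paper introduces the quaternionic spectral theorem of Farenick--Pidkowich (Theorem \ref{Spectral Theorem}) immediately before this proposition precisely so that the equality can be read off intrinsically over $\mathbb{H}$: since $A$ is normal there is a unitary $U$ with $U^*AU = D$ diagonal, so $D$ is itself a Schur form of $A$ and $a.m.(q,A)$ is the number of diagonal entries of $D$ lying in $[q]$; on the other hand the corresponding columns of $U$ are $\mathbb{H}$-independent eigenvectors spanning $V_{[q]}$ (equivalently, one invokes Proposition \ref{inv} and computes $g.m.(q,D)$ directly for a diagonal matrix), giving the same count for $g.m.(q,A)$. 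You instead push the whole statement to $\mathbb{C}^{2n\times 2n}$: you observe $f(A)f(A)^* = f(AA^*) = f(A^*A) = f(A)^*f(A)$ from the functoriality of $f$, apply the classical complex spectral theorem to get $a.m.(\lambda_q,f(A)) = g.m.(\lambda_q,f(A))$, and pull back through Propositions \ref{algeq} and \ref{geomeq}, where the factor $\tfrac{1}{2}$ in the real case cancels on both sides. Both arguments are sound; yours has the advantage of not needing the quaternionic spectral theorem at all (only the transfer propositions already proved and a standard complex fact), while the paper's intended argument stays entirely on the quaternionic side and exhibits the diagonalization explicitly, which is arguably more in the spirit of its definitions of $a.m.$ and $g.m.$
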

\section{Equivalence of the definitions}\label{3}
As stated in the Introduction, the definitions given Section \ref{2} are equivalent to the definitions of \cite{Quat}. In this final section we quickly introduce the classical definitions and we show this equivalence.
\\In order to define the classical algebraic and the geometric multiplicity of a right eigenvalue we need the notions of minimal polynomial $p^A$ related to $A$ and of the root subspace $\mathcal{M}$ of $A$ corresponding to $p^A$.
\\Notice that $\mathbb{H}^{n \times n}$ is a finite dimensional, real vector space. Then there are some polynomials $p(t) \in \mathbb{R}[t]$ such that $p(A) = 0$.
\begin{defn}
The \textbf{minimal polynomial of} $A$ is the polynomial $p^A(t) \neq 0$ which is monic and it has minimal degree.
\end{defn}
\begin{rem}
The minimal polynomial $p^A$ of a matrix $A \in \mathbb{H}^{n \times n}$ is unique.
\end{rem}
Assume that
\begin{equation}\label{polynomial}
p^A(t) = p_1(t)^{m_1} \cdot ... \cdot p_k(t)^{m_k}
\end{equation}
where $p_j(t)$ are distinct irreducible real polynomial which are monic. Notice that for each eigenvalue $\lambda$ of $A$ there is exactly one polynomial $p_j$ such that $\lambda$ is a root of $p_j$. Then the following definitions make sense.
\begin{defn}
A \textbf{root subspace} $\mathcal{M}_j$ of $A$ is the set
\begin{equation}
    \mathcal{M}_j := \{u \in \mathbb{H}^n \vert p_j(A)^{m_j}u = 0 \}
\end{equation}
where $j = 1, ..., k$. The \textbf{root subspace of the eingenvalue} $\lambda$ is the root subspace related to the polynomial $p_j$ which has $\lambda$ as a root.
\end{defn}
\begin{rem}
    Given a right eigenvalue $q$ of a quaternion matrix $A$, then $V_{[q]} \neq \mathcal{M}_j$, where $\mathcal{M}_q$ is the root subspace related to $q$. An example is the following: notice that $0$ is an eigenvalue of the matrix
    \begin{equation}
    A = \begin{bmatrix}
    0 & 0 & 0 & 0 \\
    0 & 0 & 0 & 0 \\
    1 & 0 & 0 & 0 \\
    0 & 1 & 0 & 0 \\
    \end{bmatrix}.    
    \end{equation}
Then the minimal polynomial of $A$ is $p^A(t) = t^2$ and so $\mathcal{M}_0 = ker(A^2) = \mathbb{H}^4$. So $dim_{\mathbb{H}}(\mathcal{M}_0) = 4$. On the other hand, we know that the quaternionic dimension of $V_{[0]}$ is equal to $\frac{1}{2}g.m.(0, f(A)) = 2$.
\end{rem}
\begin{defn}
Let $\lambda$ be an eigenvalue of $A$. Let $p_j(t)$ be the unique polynomial in Equality \ref{polynomial} such that $\lambda$ is a root of $p_j(t)$. Then the \textbf{quaternion geometric multiplicity} of $\lambda$ (Definition in pag. 99 of \cite{Quat}) is the number the maximal number of $\mathbb{H}$-linearly independent eigenvectors of $A$ in $\mathcal{M}_j$. We denote this number by $q.g.m.(\lambda)$.
\\On the other hand, the \textbf{quaternion algebraic multiplicity} of $\lambda$ (Definition in pag. 99 of \cite{Quat}) is defined as the quaternion dimension of $\mathcal{M}_\lambda$. We will denote this number by $q.a.m.(\lambda)$.
\end{defn}
\begin{rem}
In general $q.a.m.(\lambda) \geq q.g.m.(\lambda)$, indeed on $\mathcal{M}_\lambda$ there could be some vectors $v$ which are not eigenvectors of $A$.
\end{rem}
The equivalence of the definitions is a Corollary of the existence and the unicity \footnote{unless to some permutation of blocks} of a Jordan form for each quaternion matrix $A$. In particular, on the diagonal of the Jordan form of a quaternion matrix there are complex numbers $\lambda_i$ such that $Im(\lambda_i) \geq 0$. The proof of these facts is quite long and it is proved in Section 5.6. of \cite{Quat} (Theorem 5.5.3., from p.102 to p.109).
\\In particular we need Theorem 5.5.5. of \cite{Quat}.
\begin{thm}[Theorem 5.5.5., \cite{Quat}]
Let $A \in \mathbb{H}^{n \times n}$ and let $\lambda \in \sigma(A)$. Then:
\begin{itemize}
    \item the number of Jordan blocks in the Jordan form of $A$ corresponding to the
eigenvalues similar to $\lambda$ is equal to $q.g.m.(\lambda)$;
\item the sum of the sizes of Jordan blocks in the Jordan form of $A$ corresponding to the eigenvalues similar to $\lambda$ is equal to $q.a.m.(\lambda)$.
\end{itemize}
\end{thm}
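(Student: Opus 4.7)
The natural strategy is to leverage the existence of the Jordan form for quaternion matrices (Theorem 5.5.3 of \cite{Quat}, as referenced in the paper's narrative): there exist an invertible $S \in \mathbb{H}^{n \times n}$ and a block-diagonal matrix $J = \mathrm{diag}(J_{k_1}(\mu_1), \ldots, J_{k_r}(\mu_r))$, with each $\mu_i \in \mathbb{C}$ satisfying $\mathrm{Im}(\mu_i) \geq 0$, such that $A = SJS^{-1}$. Since the minimal polynomial, the root subspaces, the quaternionic dimensions, and the eigenvectors all transform covariantly under the map $u \mapsto Su$, both identities reduce to the case $A = J$, and from there to a block-by-block analysis.

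For the algebraic multiplicity identity, I would compute $p_j(J)^{m_j} = \mathrm{diag}\bigl(p_j(J_{k_i}(\mu_i))^{m_j}\bigr)$. Each factor is upper triangular with $p_j(\mu_i)$ on the diagonal. If $\mu_i \in [\lambda]$, i.e.\ $\mu_i$ is a (complex) root of the real irreducible factor $p_j$ associated with $\lambda$, then $p_j(J_{k_i}(\mu_i))$ is strictly upper triangular and hence nilpotent; minimality of $p^A$ ensures $m_j$ is large enough to annihilate every such block. If $\mu_i \notin [\lambda]$, then $p_j(\mu_i) \neq 0$, so the corresponding block of $p_j(J)^{m_j}$ is invertible. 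Hence $\mathcal{M}_\lambda = \ker p_j(J)^{m_j}$ decomposes as the direct sum of the coordinate subspaces supporting the blocks $J_{k_i}(\mu_i)$ with $\mu_i \in [\lambda]$, so $q.a.m.(\lambda) = \sum_{\mu_i \in [\lambda]} k_i$, matching the second assertion.

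For the geometric multiplicity identity, I would show that each Jordan block $J_{k_i}(\mu_i)$ with $\mu_i \in [\lambda]$ contributes exactly one $\mathbb{H}$-linearly independent eigenvector to $\mathcal{M}_\lambda$, and that eigenvectors drawn from distinct blocks remain $\mathbb{H}$-independent. Inside a single block, I would examine vectors $x = (x_1, \ldots, x_{k_i})^T$ satisfying $J_{k_i}(\mu_i)\,x = x\,q$ for some $q \in [\mu_i]$. The bottom equation $\mu_i x_{k_i} = x_{k_i} q$ forces $x_{k_i}$ to lie in the quaternion centralizer of $\mu_i$, and the recursive structure of the block then determines $x_{k_i - 1}, \ldots, x_1$ up to a single right quaternionic scalar. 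The independence across distinct blocks is immediate from the direct-sum decomposition used in the first part.

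The main obstacle will be the eigenvector bookkeeping inside a single Jordan block: one must verify that, as $q$ ranges over the entire similarity class $[\mu_i]$, the union of the right $q$-eigenspaces inside that block spans a right $\mathbb{H}$-subspace of quaternionic dimension exactly $1$, not larger. This hinges on the fact that the centralizer of a non-real $\mu_i$ in $\mathbb{H}$ is the two-real-dimensional algebra $\mathbb{C}[\mu_i]$, which precisely matches the extra degree of freedom gained by allowing right multiplication by representatives of $[\mu_i]$; reconciling this with the structure of $V_{[q]}$ from Section \ref{2} and with the non-commutative recursion in the block is the subtle point that makes the argument work.
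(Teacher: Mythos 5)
The paper does not prove this statement at all: it is quoted verbatim from Rodman (Theorem 5.5.5 of \cite{Quat}), and the author explicitly defers to Section 5.6 of that book, so there is no internal proof to compare against. Judged on its own terms, your reduction to the Jordan form and the algebraic-multiplicity half are sound: conjugating by $S$ carries root subspaces to root subspaces, $p_j(J)^{m_j}$ is block diagonal, the blocks with $\mu_i\notin[\lambda]$ give invertible factors, and minimality of $p^A$ forces $m_j\geq k_i$ for every block in the class, so $\mathcal{M}_\lambda$ is exactly the coordinate subspace carried by the blocks similar to $\lambda$ and $q.a.m.(\lambda)=\sum_{\mu_i\in[\lambda]}k_i$.

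The geometric-multiplicity half, however, has a genuine gap, and the step you sketch is actually wrong as stated. The bottom equation $\mu_i x_{k_i}=x_{k_i}q$ does not force $x_{k_i}$ into the centralizer of $\mu_i$; if $x_{k_i}\neq 0$ it forces $q=x_{k_i}^{-1}\mu_i x_{k_i}$. More importantly, the recursion does \emph{not} then determine $x_{k_i-1},\dots,x_1$ ``up to a single right scalar'': if that were true you could build an eigenvector with nonzero last entry, which together with $e_1$ would give two $\mathbb{H}$-independent eigenvectors in one block and contradict the very statement being proved. The correct fact is the opposite: every eigenvector of a single Jordan block has all entries except the first equal to zero. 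Indeed, if $r\geq 2$ is the largest index with $x_r\neq 0$, then row $r$ gives $q=x_r^{-1}\mu_i x_r$ and row $r-1$ gives $x_{r-1}q-\mu_i x_{r-1}=x_r$; setting $y=x_{r-1}x_r^{-1}$ this becomes $y\mu_i-\mu_i y=1$, which is impossible because a commutator in $\mathbb{H}$ has zero real part. This is precisely the verification you flag as ``the main obstacle'' and leave unproved, so the claim that each block contributes quaternionic dimension exactly one (and hence the count $q.g.m.(\lambda)=$ number of blocks) is not established by your argument; you should also record the small observation that any eigenvector for an eigenvalue $q\in[\lambda]$ automatically lies in $\mathcal{M}_\lambda$ (since $p_j(A)x=xp_j(q)=0$ for the real polynomial $p_j$), which is what makes comparing $q.g.m.(\lambda)$ with the block count legitimate.
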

\begin{cor}
Let $A$ be a matrix in $\mathbb{H}^{n \times n}$. Then, for each eigenvalue $\lambda$ of $A$,
\begin{equation}
    a.m.(q, A) = q.a.m.(q, A) \mbox{     and     } g.m.(q, A) = q.g.m.(q, A).
\end{equation}
\end{cor}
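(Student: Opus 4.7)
The plan is to reduce everything to Jordan block data: compare both the new and the classical multiplicities on the Jordan form of $A$, and then invoke Theorem 5.5.5 to close the loop. By Theorem 5.5.3 of \cite{Quat}, pick an invertible $S \in \mathbb{H}^{n \times n}$ such that $J := S^{-1} A S$ is block diagonal with Jordan blocks $J_{k_1}(\mu_1), \dots, J_{k_r}(\mu_r)$, each $\mu_i \in \mathbb{C}$ with $Im(\mu_i) \geq 0$. By Proposition \ref{inv} it suffices to prove both equalities with $J$ in place of $A$; and since two complex numbers in the closed upper half-plane are similar as quaternions if and only if they coincide, the phrase ``eigenvalues of $J$ similar to $\lambda_q$'' in Theorem 5.5.5 reduces, when applied to the diagonal entries of $J$, to ``blocks with $\mu_i = \lambda_q$''.

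For the algebraic side, $J$ is already upper triangular with each $\mu_i$ appearing $k_i$ times on the diagonal. The argument of Proposition \ref{algeq} uses only the similarity $\pi(T) \sim \pi(A)$ and the triangular structure of $T$, so it applies verbatim to $J$: one concludes that $a.m.(q, J) = \sum_{i : \mu_i = \lambda_q} k_i$, which is precisely the sum of sizes of Jordan blocks corresponding to eigenvalues similar to $\lambda_q$, and hence equals $q.a.m.(\lambda_q)$ by Theorem 5.5.5.

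For the geometric side I would combine Proposition \ref{geomeq} with a direct computation of $f(J_k(\mu))$. Since $J_k(\mu) = B_1 + B_2 j$ with $B_1 = J_k(\mu)$ and $B_2 = 0$, the definition of $f$ yields $f(J_k(\mu)) = J_k(\mu) \oplus J_k(\overline{\mu})$, which is itself in complex Jordan form. Summing over the blocks of $J$: if $Im(\lambda_q) > 0$ then $g.m.(\lambda_q, f(A)) = \#\{i : \mu_i = \lambda_q\}$, while if $\lambda_q \in \mathbb{R}$ each real $\mu_i = \lambda_q$ produces two identical complex Jordan blocks and so $g.m.(\lambda_q, f(A)) = 2\,\#\{i : \mu_i = \lambda_q\}$. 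Applying Proposition \ref{geomeq} in each case absorbs the extra factor and gives $g.m.(q, A) = \#\{i : \mu_i = \lambda_q\}$, which is $q.g.m.(\lambda_q)$ by Theorem 5.5.5.

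The main obstacle is simply the real-vs-nonreal bookkeeping: the complex adjoint doubles every real eigenvalue, and the factor $\frac{1}{2}$ in Propositions \ref{algeq} and \ref{geomeq} must cancel this doubling cleanly in both the algebraic and the geometric case. Once the block decomposition of $f(J_k(\mu))$ is written down, the two cases collapse to the same block-count formula $\sum k_i$ and $\#\{i : \mu_i = \lambda_q\}$ respectively, and the corollary follows.
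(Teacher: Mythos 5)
Your proof is correct and follows essentially the same route as the paper: pass to the Jordan form $J$, invoke Theorem 5.5.5 of \cite{Quat}, and combine Propositions \ref{inv}, \ref{algeq} and \ref{geomeq}, with the factor $\frac{1}{2}$ cancelling the doubling of real eigenvalues in $f(J)$. The only difference is cosmetic: you spell out the computation $f(J_k(\mu)) = J_k(\mu) \oplus J_k(\overline{\mu})$ and the resulting block counts, which the paper's proof leaves implicit.
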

\begin{proof}
    Let $J$ be the Jordan form of $A$. Notice that $J$ and $A$ are similar. Let $q$ be a right eigenvalue of $A$ and let $\lambda_q$ be the complex number with $Im(\lambda_q) \geq 0$ and $\lambda_q \sim q$. Then, if $Im(\lambda_q) \neq 0$
\begin{equation}
    q.g.m.(q, A) = q.g.m.(\lambda_q, J) = g.m.(\lambda_q, f(J))
\end{equation}
and, if $\lambda_q$ is real,
\begin{equation}
    q.g.m.(q, A) = q.g.m.(\lambda_q, J) = \frac{1}{2}g.m.(\lambda_q, f(J)).
\end{equation}
Moreover
\begin{equation}
    q.a.m.(q, A) = q.a.m.(\lambda_q, J) = a.m.(\lambda_q, f(J))
\end{equation}
and, if $\lambda_q$ is real,
\begin{equation}
    q.a.m.(q, A) = q.a.m.(\lambda_q, J) = \frac{1}{2}a.m.(\lambda_q, f(J)).
\end{equation}
Since $A$ and $J$ are similar, then we conclude by appliying Propositions \ref{inv}, \ref{geomeq} and \ref{algeq}.
\end{proof}
	\phantomsection
	\bibliographystyle{sapthesis} 
	\addcontentsline{toc}{section}{\bibname}
	\end{document}